%% file: arxiv-submission.tex
\documentclass[envcountsame,envcountsect]{svmult}

\usepackage{type1cm}        
\usepackage{makeidx}         
\usepackage{graphicx}        
\usepackage{multicol}        
\usepackage[bottom]{footmisc}

\usepackage{todonotes}
\usepackage{tikz}

\usepackage{newtxtext}       %
\usepackage[varvw]{newtxmath}       
\newcommand{\eTPF}[2]{\mathrm{eTPF}(#1,#2)}
\newcommand{\mult}{\mathrm{mult}}

\makeindex             

\begin{document}

\title*{A colorful way to park: An introduction to exact $k$-Typed Parking Functions}
\author{Aalliyah Celestine\orcidID{0000-0002-9355-1383} and \\ Lina Liu\orcidID{0000-0002-0742-9032} and \\ Jacob van der Leeuw}
\institute{Aalliyah Celestine 
\and Lina Liu 
\and Jacob van der Leeuw 
}
%
%
\maketitle

\abstract{Parking functions are tuples that describe the parking of $M$ cars on a street with $M$ parking spots. In this paper, we define exact $k$-typed parking functions ($k$-TPFs) to be a variant of classical parking functions.
We then establish that every exact $k$-TPF $\alpha$ of length $M$, corresponds to a unique parking configuration $C$. We observe that the collection of all exact $k$-TPFs which result in the same configuration form a disjoint subset of all exact $k$-TPFs. Lastly, we conclude by showing how parking permutations of an exact $k$-TPF can be related to other combinatorial objects.}

\section{Introduction}
Parking functions were initially introduced by Konheim and Weiss \cite{science-DOI-kon} and are defined as follows: Imagine a one-way street with parking spots labeled with the integers $1$ through $M$. Suppose there are $M$ cars wanting to park, each of which has a parking preference. A \emph{parking preference} is in the form of the tuple $\alpha = (p_1, \ldots, p_M) \in [M]^M$, where $p_j$ is the preferred spot in which car $c_j$ would park. Thus, for $j=1,\ldots,M$, the driver of the $j^{th}$ car parks according to their preference in order from car $c_1$ to car $c_M$. If car $c_j$'s preferred spot $p_j$ is empty, then the car parks there. If the spot is instead occupied, car $c_j$ continues moving forward until it finds the next available spot. If the car reaches the end of the street without having parked, then $\alpha$ is not a valid parking function. Otherwise, all cars can park in the preferred $M$ number of spots, and $\alpha$ is a parking function. 

A variation of classical parking functions, that we will present, assigns a \textit{type} from 1 through $k$ to each of the cars and is known as an exact $k$-typed parking function. The allotted rules for parking are as follows: Suppose there is an empty one-way street with $M$ parking spots available. 
The preferences for each car now depends on how many cars of a lower type have 
been previously parked.
We use $P_i$ to denote the preference list associated with cars of type $i$.
The final parking function, which we denote $\alpha=(m_1; P_2, \ldots, P_k)$, is the tuple containing preferences for the cars grouped by type.
In this paper, we answer three major questions regarding exact $k$-typed parking functions.
\begin{enumerate}
\item How can we directly enumerate the number of exact $k$-typed parking functions, which we denote as $k$-TPFs? Here, we count the number of exact $k$-TPFs by considering the number of parking possibilities each car of each type has.
\item \label{two} How many $\alpha$-permutations of an exact $k$-TPF are there for a given number of type $k$? A one-to-one relationship is then established between parking configurations and the families of exact $k$-TPFs. We also demonstrate that these families are disjoint and span the entire space of exact $k$-TPFs. And lastly,
\item \label{three} How many exact $k$-typed parking functions are there?
\end{enumerate}

The results of the last two questions determine the total number of exact $k$-TPFs. Ultimately, this establishes an equality by showing that the number of exact $k$-TPFs can be counted in two distinct ways: first, by a classical counting argument (\ref{two}) and secondly, counting parking functions via groupings (\ref{three}).

We organized the paper as follows: In Section 2, we provide the definition of an exact $k$-typed parking function along with examples to clarify some basic properties of an exact $k$-TPF. We also define parking permutations for exact-$k$-TPFs in Section 2.
Following this, in Section 3 we present a way to count all the $k$-TPFs given a fixed $M$ and order ($m_1,m_2\ldots, m_k)$. In Section 4, we define a family of exact $k$-TPFs given the definition of parking permutations. We conclude with a theorem that connects the number of families to the number of distinct parking configurations. 
Section 5 provides an alternative way to count all the exact $k$-typed parking functions using the families defined in Section 4. 
Finally, we discuss a list of directions for future works in Section 6.

\section{Basic properties of $k$-TPFs}\label{sect2}

In this section we discuss the parking rules for $k$-typed parking functions. Let $M$ denote the number of parking spots and cars. Let $m_1, \ldots, m_k$ be a sequence of positive integers with $m_1 + \cdots + m_k = M$. Each $m_i$ corresponds to the number of cars of type $i$, for $i\in \{1,...,k\}$. 

\begin{definition}($k$-typed parking function).
We call a sequence $\alpha=(m_1; P_2, \ldots, P_k)$ a \emph{$k$-typed parking function ($k$-TPF)} of order $(m_1, m_2, \ldots, m_k)$ if there exists a parking configuration of the $M$ cars that satisfies the following for all drivers:
\begin{enumerate}
    \item Each $P_i$ corresponds to a preference list $P_i = \left(p_1^{(i)}, p_2^{(i)}, \ldots, p_{m_i}^{(i)} \right)$ for cars of type $i$, where $i \geq 2$.
    \item For each $p_j^{(i)}$, the superscript $(i)$ denotes the car's type and the subscript $j$ is the index within preference list $P_i$.
    \item For each preference list $P_i$, the driver of the $j^{\rm th}$ car has the preference that there be {\it at least} $p_j^{(i)}$ cars of type $\{1, \ldots, i-1\}$ parked before them. This driver's preference does not depend on other cars of type $i$.
\end{enumerate} 
\end{definition}

\begin{definition}(Exact $k$-typed parking functions). \label{def: exact kTPF}
    In the case that every car prefers \emph{exactly} $p_{j}^{(i)}$ cars of type $\{1, \ldots, i-1\}$ parked before them, then we say $\alpha$ is an \emph{exact $k$-typed parking function (exact $k$-TPF)} of order $(m_1, m_2, \ldots, m_k)$. We let $\eTPF{M}{k}$ denote the set of all exact $k$-typed parking functions of $M$ cars \footnote{When $k=1$, the $m_1$ cars of type 1 inherently do not have any preference list. That is, the cars park without restrictions and do not need to be considered when describing preference values.}.
\end{definition}

\begin{definition}(Resulting configuration).
    If $\alpha = (m_1; P_2, \ldots, P_k)$ is an exact $k$-typed parking function of order $(m_1, m_2, \ldots, m_k),$ then we call the outcome of its parking arrangement the \emph{resulting configuration} $C$.
\end{definition}

To illustrate the difference between the original $k$-TPF definition that uses ``at least'' versus the exact $k$-TPF definition, we look at the following example.
\begin{example}\label{example1}
    Consider $\alpha=(2; (0))$ as a 2-TPF. We observe that the only car of type 2 prefers to park before {\it at least} 0 cars of type 1. Hence, the car can either park in the first available spot or behind the first or second car of type 1. Therefore, the three possible resulting configurations are $211$, $121$, or $112$. If we instead use the definition of exact $k$-TPFs, the car of type 2 must park in the $0^{\text{th}}$ {\textit {gap}} and gives us the resulting configuration $211$. 
\end{example}

Throughout the rest of the paper, our results are pertaining to {\it exact} $k$-TPFs unless specified otherwise. We now discuss the motivation and definition of \emph{gap} as it is used in Example~\ref{example1}.

Given an exact $k$-TPF $\alpha=(m_1; P_2, \ldots, P_k)$, and when parking cars of type $i \geq 2$, there are $m_1+ m_2+ \cdots +m_{i-1}$ cars of a lower type available to be chosen as a preference.
\begin{definition}(Gap).
 We call the $\ell^{\text{th}}$ \emph{gap} the space after the $\ell^\text{th}$ parked car of type less than $i$. With the convention that the $0^\text{th}$ gap is the space before all cars of type less than $i$. 
\end{definition}
We note that gaps do not have a fixed size, meaning that we can fit up to $m_i$ cars of type $i$ in any  given gaps. Hence there is a distinction between the words ``space'' and  ``gap''.
\footnote{This explains why it is possible for multiple cars of type $i$ to share the same preference.}

We now provide two separate techniques of parking exact $k$-TPFs that lead to the same resulting configuration. Analyzing these configurations will ultimately uncover families of parking functions based on permuting the entries of the preference lists.

\begin{example}\label{example2}
    Let $\alpha = (4;(0,1,1,2,2))$ be an exact 2-typed parking function of order (4,5). One technique to finding the resulting parking configuration is to park one car at a time, regardless of repeated preferences within a type. First, place the cars of type 1 without restriction in a parking spot. Then the following tuple $(0,1,1,2,2)$ is parked as follows:
    \begin{itemize}
        \item The first car of type 2 parks in gap 0.
        \item The second car of type 2 parks in gap 1.
         \item The third car of type 2 parks in gap 1.
    \end{itemize}
    Repeat this until all the cars are parked. The resulting configuration is as follows:
\begin{figure}[h!!!]
    \centering
    \boxed{\textcolor{black}{2}}\boxed{\textcolor{black}{1}}\boxed{\textcolor{black}{2}}\boxed{\textcolor{black}{2}}\boxed{\textcolor{black}{1}}\boxed{\textcolor{black}{2}}\boxed{\textcolor{black}{2}}\boxed{\textcolor{black}{1}}\boxed{\textcolor{black}{1}}
       \label{fig:my_label}
\end{figure}
\end{example}

We now provide a second way to park cars involving simultaneous parking that results in the same configuration. 
\begin{example}\label{example3}
Again, let $\alpha = (4;(0,1,1,2,2))$. Here, there is only one preference list $P_2$ for which we park in three steps below:
\begin{itemize}
        \item Park the $1$ car of type $2$ with a preference of $0$ in gap 0.
        \item Park the $2$ cars of type $2$ with a preference of $1$ in gap $1$ simultaneously.
         \item Park the $2$ cars of type $2$ with a preference of $2$ in gap $2$ simultaneously.
\end{itemize}
\end{example}

We observe that iterative parking (Example 2) or simultaneously parking cars of the same preference (Example 3) will produce the same resulting configuration.

Now following the approach demonstrated in Example \ref{example3}, we show when a tuple $\alpha$ is an exact 2-typed parking function.

\begin{lemma}\label{lem:2color}
Let $M=m_1+m_2$. If $\alpha=(m_1;P_2)$ with $P_2=(p_1^{(2)},p_2^{(2)},\ldots,p_{m_2}^{(2)})$ satisfies $p_j^{(2)}\in\{0,1,2,\ldots, m_1\}$ for all $1\leq j\leq m_2$, then $\alpha \in \eTPF{M}{2}$.
\end{lemma}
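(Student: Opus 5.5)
We prove the lemma by exhibiting a parking configuration in which every type-2 driver's exact preference is met. This is exactly what Definition~\ref{def: exact kTPF} requires, so membership in $\eTPF{M}{2}$ follows once the configuration is built.

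\textbf{Step 1: park the type-1 cars.} We follow the simultaneous procedure of Example~\ref{example3}. First the $m_1$ cars of type $1$ park without restriction; they have no preference list. Relative to type $2$, this creates exactly $m_1+1$ gaps, labelled $0,1,\ldots,m_1$. Gap $\ell$ is the space after the $\ell^{\text{th}}$ type-1 car, with gap $0$ before all of them and gap $m_1$ after the last.

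\textbf{Step 2: park the type-2 cars.} For each value $\ell\in\{0,1,\ldots,m_1\}$ let
\[
n_\ell=\#\{\,j : p_j^{(2)}=\ell\,\}.
\]
By the hypothesis $p_j^{(2)}\in\{0,\ldots,m_1\}$, every type-2 car is counted in exactly one $n_\ell$, so $\sum_{\ell=0}^{m_1} n_\ell=m_2$. We place $n_\ell$ type-2 cars into gap $\ell$. Gaps have no fixed size and may hold any number of type-2 cars, so this is always possible.

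\textbf{Step 3: read off the configuration.} Reading the street left to right gives the word
\[
2^{n_0}\,1\,2^{n_1}\,1\,\cdots\,1\,2^{n_{m_1}}.
\]
It contains $m_1$ ones and $m_2$ twos, so it uses exactly $M=m_1+m_2$ spots. In it, each car with preference $\ell$ has exactly $\ell$ type-1 cars before it, as required.

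\textbf{Step 4: check the one-at-a-time procedure.} Following Example~\ref{example2} instead, inserting car $j$ into gap $p_j^{(2)}$ does not change how many type-1 cars precede any already-parked type-2 car. Inserting a type-2 car never alters the relative order of the type-1 cars. Hence the sequential procedure yields the same configuration, with every preference still satisfied.

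The main subtlety is Step 1, specifically that gap $p_j^{(2)}$ exists for every $j$. This is exactly where the hypothesis $p_j^{(2)}\le m_1$ enters: a preference of $m_1+1$ or more would require more type-1 cars than exist. Since preferences are nonnegative integers by definition, no lower bound issue arises. Everything else is bookkeeping, so the proof will be short.
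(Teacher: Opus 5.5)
Your proof is correct and follows essentially the same route as the paper: park the $m_1$ type-1 cars, then put each type-2 car into gap $p_j^{(2)}$, which exists precisely because $p_j^{(2)}\le m_1$. You are more explicit than the paper in writing down the witnessing configuration $2^{n_0}\,1\,2^{n_1}\,1\cdots 1\,2^{n_{m_1}}$ (which the paper leaves implicit), and your Step 4 is not needed for the lemma but does no harm.
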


\begin{proof}
    Let $\alpha=(m_1;P_2)$ where $P_2=(p_1^{(2)},p_2^{(2)},\ldots,p_{m_2}^{(2)})$ satisfies $p_j^{(2)}\in\{0,1,2,\ldots, m_1\}$ for all $1\leq j\leq m_2$. We then have $m_1$ cars of type 1 that park without preference. There remains $m_2$ cars of type 2 that each need to park with the preference of the $j^{\text{th}}$ car $p_j^{(2)}$, respectively. By Definition~\ref{def: exact kTPF}, we have that $\alpha$ is an exact $k$-TPF when $p_j^{(2)}$ is at most the number of type 1 cars. 
\end{proof}

    In following this approach, we see that all the cars have a preference value that matches a parking gap between $0$ and $m_1$ (inclusive).

The process used in the previous examples and lemma can be expanded for cases involving more than one preference list $P_i$. In these cases, for each preference list, we park cars with the same preference simultaneously. Now, we consider Lemma~\ref{lem:2color} for a general $k$, which is followed by the properties of permutations of a $k$-TPF $\alpha$. 

\begin{lemma}\label{lemma0.4} 
 Let $M$ and $(m_1, m_2,\ldots, m_k)$ be given. If all $P_i$ have preference values between $0$ and $\displaystyle{\sum_{j=1}^{i-1}m_j}$ for $i=2,3,\dots, k$, then $\alpha=(m_1;  P_2, P_3, \dots, P_k)$ is an exact $k$-TPF.
\end{lemma}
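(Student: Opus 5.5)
Induction on the number of types $k$ looks like the natural route, with Lemma~\ref{lem:2color} as the base case $k=2$. Let $S_i=\sum_{j=1}^{i}m_j$. The claim to carry through the induction is slightly stronger than the statement, because it records the configuration produced at each stage: \emph{after the cars of types $1,\dots,i$ have been parked according to $(m_1;P_2,\dots,P_i)$, we obtain a well-defined linear arrangement of exactly $S_i$ cars, each of type at most $i$.} For $i=1$ this is trivial, since the $m_1$ cars of type 1 park without restriction. For $i=2$ it is exactly what the proof of Lemma~\ref{lem:2color} gives: every preference lies in $\{0,\dots,m_1\}$, so each type-2 car has a valid gap.

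For the inductive step, assume the arrangement after types $1,\dots,i-1$ exists and contains $S_{i-1}$ cars, all of type less than $i$. I will use the definition of gap: for $\ell=0,1,\dots,S_{i-1}$, the $\ell^{\text{th}}$ gap is the space after the $\ell^{\text{th}}$ of these cars, and the $0^{\text{th}}$ gap is the space before all of them. So there are exactly $S_{i-1}+1$ gaps, indexed $0$ through $S_{i-1}$. By hypothesis each $p_j^{(i)}\in\{0,\dots,S_{i-1}\}$, so every type-$i$ car has an existing gap with exactly $p_j^{(i)}$ cars of lower type before it, as Definition~\ref{def: exact kTPF} requires.

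Next I would park the type-$i$ cars as in Example~\ref{example3}, placing all cars with a common preference simultaneously into their gap. This works because gaps have no fixed capacity. Inserting type-$i$ cars does not change how many lower-type cars precede any given lower-type car. Hence the gap index seen by any type-$i$ car is unaffected by the other type-$i$ cars, matching the clause ``this driver's preference does not depend on other cars of type $i$''. The result is an arrangement of $S_{i-1}+m_i=S_i$ cars, which closes the induction. Taking $i=k$ gives a configuration of all $M=S_k$ cars in which every driver's exact preference is met, so $\alpha\in\eTPF{M}{k}$.

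The main obstacle is really one of precision rather than difficulty: making sure the count of lower-type cars used for type $i$ is $S_{i-1}$, counting cars of \emph{all} lower types, not just type $i-1$. I also need to check that later insertions never invalidate earlier preferences. They do not: a type-$i'$ car inserted for $i'>i$ is not of type less than $i$, so it does not change how many cars of types $1,\dots,i-1$ precede any type-$i$ car. I would state this invariance explicitly as a one-line remark inside the inductive step.
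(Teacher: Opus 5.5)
Your proposal is correct and follows the same route as the paper. Both argue by induction on the type index $i$ with Lemma~\ref{lem:2color} as the base case, and both observe that once the $\sum_{j=1}^{i-1}m_j$ lower-type cars are parked there are exactly $\sum_{j=1}^{i-1}m_j+1$ gaps, so every allowed preference names an existing gap. Your strengthened invariant, together with the explicit remark that later higher-type insertions never change how many lower-type cars precede an already-parked car, makes rigorous what the paper's proof leaves implicit.
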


\begin{proof}
We prove this by induction on $i$. We have shown that the statement holds true for the base case $i=2$ from Lemma~\ref{lem:2color}. Now assume that $\alpha$ is an exact $k$-TPF for $2\leq i \leq k-1$. 
Then by induction for $i=k$, the cars of type $1, 2,\dots, k-1$ are all parked.
This means there is a total of $M-m_k$ cars parked; therefore there are $M-m_k+1$ gaps for the type $k$ cars to choose from.
Then, by Definition~\ref{def: exact kTPF}, $\alpha=(m_1;  P_2, P_3, \dots, P_k)$ is an exact $k$-TPF. 
\end{proof}

\begin{definition}(Parking permutation).\label{definition:permutation}
An exact $k$-TPF $\beta=(m_1;Q_2,\ldots,Q_k)$ is a \textit{parking permutation} of $\alpha=(m_1;P_2,\ldots,P_k)$ if there exist at least one $Q_i$ for $i=2,\ldots,k$ in $\beta$ that is a rearrangement of the corresponding $P_i$ in $\alpha$. 
\end{definition}

\begin{example}\label{ex:perm ex}
Let $\alpha=(4; (2,3,4))$. Here $M=7, \, m_1=4, \, m_2=3.$ Then all the parking permutations of the exact $k$-TPF $\alpha$ are
\[
    (4; (3,4,2)), \quad (4; (2,4,3)) \]
    \[\quad (4; (4,2,3)), \quad (4; (3,2,4)),\quad (4; (4,3,2)).
\]
 \end{example}

\begin{theorem}
\label{thrm1}
Every parking permutation of an exact $k$-TPF $\alpha$ of order $(m_1, m_2, \ldots, m_k)$ results in the same parking configuration $C$.
\end{theorem}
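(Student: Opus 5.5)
We will prove the theorem by showing that the resulting configuration of an exact $k$-TPF $\alpha=(m_1;P_2,\ldots,P_k)$ depends on each preference list $P_i$ only through its \emph{multiset} of values. Concretely, we record for each type $i\ge 2$ and each gap $\ell\in\{0,1,\ldots,\sum_{j<i}m_j\}$ the number
\[
n_i(\ell)=\#\{\,j : p^{(i)}_j=\ell\,\},
\]
and show that the configuration is determined by the numbers $n_i(\ell)$. Since a parking permutation $\beta=(m_1;Q_2,\ldots,Q_k)$ has each $Q_i$ a rearrangement of $P_i$ (we read Definition~\ref{definition:permutation} as allowing the other lists to be left unchanged, which are trivially rearrangements of themselves), $\alpha$ and $\beta$ have the same counts $n_i(\ell)$. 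By Lemma~\ref{lemma0.4} both are exact $k$-TPFs, and so they will have the same configuration $C$.

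The argument is an induction on the type $i$. Let $C_i$ denote the word in $\{1,\ldots,i\}$ obtained after all cars of types $1,\ldots,i$ have parked. $C_1=1^{m_1}$ is the same for both $\alpha$ and $\beta$, since type $1$ cars park without restriction and have no preference list.

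For the inductive step, assume $C_{i-1}$ is the same for $\alpha$ and $\beta$; it contains $s=\sum_{j<i}m_j$ lower-type cars and hence has $s+1$ gaps. We use the exactness condition to describe where a car of type $i$ with preference $\ell$ parks: it must end up with exactly $\ell$ lower-type cars before it, which places it in gap $\ell$. Since cars of type $i$ do not alter which lower-type cars precede any position (the preference ``does not depend on other cars of type $i$''), parking the type $i$ cars one at a time in any order, as in Example~\ref{example2}, gives the same result as inserting all of them simultaneously, as in Example~\ref{example3}. The outcome is
\[
C_i = i^{\,n_i(0)}\,w_1\,i^{\,n_i(1)}\,w_2\cdots w_s\,i^{\,n_i(s)},
\]
where $w_1\cdots w_s=C_{i-1}$. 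This word depends only on $C_{i-1}$ and the counts $n_i(\cdot)$, so $C_i$ is the same for $\alpha$ and $\beta$. Taking $i=k$ gives the theorem.

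The main obstacle is making this middle step rigorous. The point to establish is that within a gap, cars of the same type are indistinguishable in the configuration, and that inserting a type $i$ car never changes the gap index of later type $i$ cars. We will argue this directly: positions of type $i$ cars are not counted in any other type $i$ car's preference, so the gap structure of $C_{i-1}$ is invariant during the parking of type $i$. Once that is settled, the displayed formula follows, and the rest of the argument is bookkeeping.
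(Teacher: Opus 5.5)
Your proof is correct and is essentially the paper's argument: an induction over the types, where the key observation is that the final positions of the type-$i$ cars depend only on the multiset of values in $P_i$. The paper inducts on $k$, with base case $k=2$. Your induction $C_{i-1}\to C_i$ is the same argument, and your explicit word $C_i = i^{n_i(0)} w_1\, i^{n_i(1)} \cdots w_s\, i^{n_i(s)}$ simply makes precise the ``depends only on the multiset'' step that the paper states informally.
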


\begin{proof}
We begin by considering an exact 2-TPF $\alpha = (m_1; P_2)$ where $m_1$ is the number of cars of type 1 and $P_2=(p_1^{(2)}, p_2^{(2)}, \dots, p_{m_2}^{(2)})$ is the preference list for the $m_2$ cars of type 2. Now let $\beta = (m_1; P_2')$ be a parking permutation of an exact $k$-TPF $\alpha$, where $P_2'$ is some rearrangement of the elements in $P_2$. By definition, every car in $P_2'$ prefers exactly $p_j^{(2)}$ cars of type 1 parked before them for $1 \le j \le m_2$. Hence the cars are parked in each open gap depend only on their preference and not the order. Therefore, $\alpha$ and $\beta$ both result in the same configuration $C$. Thus, the base case holds. 

Now assume that for $k > 2$, every parking permutation of an exact $(k-1)$-TPF results in the same parking configuration. Consider an exact $k$-TPF $\alpha'=(m_1;P_2,\ldots,P_k)$ and let $\beta'=(m_1;Q_2,\ldots,Q_k)$ be a parking permutation of an exact $k$-TPF $\alpha$, meaning that for every $P_j$ with $j \in (2, \ldots, k)$, $Q_j$ is a parking permutation of $P_j$. We want to show that $\alpha$ and $\beta$ result in the same parking configuration. We can think of parking the $m_1$ cars of type 1 first. By the induction hypothesis, any parking permutation of the preferences for types 2 to $k-1$ will result in the same parking configuration.

Now, we park the $m_k$ cars of type $k$ according to their preference list for $\alpha$ and $\beta$ as done in our second technique. Similar to the base case, the final positions of the type $k$ cars depend only on the \emph{multiset} of preferences in $m_k$ for $\alpha$ and $\beta$, which are the same since $\alpha$ is a parking permutation of $\beta$. It is clear that the order in which the type $k$ cars choose their spots based on the preferences are permuted, but results in the same configuration.
\end{proof}

\begin{corollary}
    A given parking configuration $C$ can only arise in one of the following ways:
    \begin{itemize}
        \item an exact $k$-TPF $\alpha$;
        \item a parking permutation of $\alpha$;
        \item or both an exact $k$-TPF $\alpha$ and any one of its parking permutations.
    \end{itemize}
\end{corollary}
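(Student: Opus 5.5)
The plan is to read the corollary as a statement about the set $\mathcal{F}(C)$ of exact $k$-TPFs whose resulting configuration is $C$. It says that if $C$ arises at all, then every element of $\mathcal{F}(C)$ is either one fixed $\alpha$ or a parking permutation of it. So I would fix a configuration $C$ realized by some exact $k$-TPF $\alpha=(m_1;P_2,\ldots,P_k)$. It then suffices to show that any $\beta=(m_1;Q_2,\ldots,Q_k)$ with resulting configuration $C$ satisfies one of two conditions: $Q_i=P_i$ for all $i$, or $\beta$ is a parking permutation of $\alpha$ in the sense of Definition~\ref{definition:permutation}. The three bullets are then exhausted. The first bullet covers $\beta=\alpha$, the second covers a genuine rearrangement, and the third records that $\alpha$ and its permutations may all produce $C$ simultaneously, which is exactly Theorem~\ref{thrm1}.

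The key step is the converse of Theorem~\ref{thrm1}: the configuration $C$ determines the multiset of each preference list $P_i$. I would show this by reading the lists off $C$. Fix $i\ge 2$ and scan $C$ from left to right, keeping only the cars of type $\le i$. For each type-$i$ car, count the cars of type $<i$ lying to its left. By the exact parking rule this count equals that car's preference, since it sits in that numbered gap. The later insertions of cars of type $>i$ do not change the relative order of cars of type $\le i$. Therefore the multiset $\{p^{(i)}_1,\ldots,p^{(i)}_{m_i}\}$ equals the multiset of these counts, which depends only on $C$. Also, $m_1$ and the order $(m_1,\ldots,m_k)$ are recovered by counting the cars of each type in $C$.

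From this, if $\beta$ also yields $C$, then each $Q_i$ has the same multiset as $P_i$, so each $Q_i$ is a rearrangement of $P_i$. Either all rearrangements are trivial, giving $\beta=\alpha$, or at least one $Q_i$ is a nontrivial rearrangement, giving a parking permutation. Combining this with Theorem~\ref{thrm1} for the reverse direction shows that $\mathcal{F}(C)$ is precisely $\alpha$ together with its parking permutations, as the corollary states.

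The main obstacle I expect is the invariance claim in the reading-off step: inserting higher-type cars never reorders the lower-type cars already parked. I would argue this by induction on the type being parked, noting that parking into a gap only inserts new cars between existing ones. A smaller point needs care as well. The paper's definition of parking permutation literally allows the trivial rearrangement, so I would state explicitly which reading I am using, so that the first two bullets are distinguished.
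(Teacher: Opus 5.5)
Your proposal is correct and follows essentially the same route as the paper: the configuration $C$ forces the order $(m_1,\ldots,m_k)$ (by counting cars of each type) and the multiset of each preference list, so any $\beta$ producing $C$ is $\alpha$ or a rearrangement of it. The paper phrases this as a contradiction with a case split on the order, and it only asserts that a differing multiset yields a different configuration. Your read-off of each preference as the number of lower-type cars to its left (the same decoding used in the existence part of Proposition~\ref{prop2_1}) actually justifies that step.
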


\begin{proof}
    We will proceed with proof by contradiction. Assume that there are two exact $k$-TPFs $\alpha$ and $\beta$ that are not parking permutations of one another that both result in parking configuration $C$. 
    
    Suppose $\alpha$ and $\beta$ are of different orders. Let $m_i^{\alpha}$ be the number of cars of type $i$ in $\alpha$ and $m_i^{\beta}$ be the number of cars of type $i$ in $\beta$. If we iterate through all cars of type $i=1,\ldots,k$, then there will be at least one case in which $m_i^{\alpha} \neq m_i^{\beta}$. As a result, the preference list for cars of type $i$ in $\alpha$ would not be equal to the preference list for cars of type $i$ in $\beta$. Therefore, since $\alpha$ and $\beta$ are not parking permutations of each other and there are different amounts of type $i$ cars in each parking function, $\alpha$ and $\beta$ could not result in the same parking configuration $C$.

    Now suppose $\alpha$ and $\beta$ have the same order.
    Then by definition, since $\alpha$ and $\beta$ are not parking permutations of one another, there must be no type $i$ car with the preference list $P_i$ for $\alpha$ in which is not a rearrangement of the preference list $Q_i$ for $\beta$.
    This means that the cars of type $i$ in $\alpha$ will park in a different preferred spot that is not a parking permutation of that pertaining to the cars of type $i$ in $\beta$. As a result, the parking configuration $C$ generated by $\alpha$ would not be the same as that generated by $\beta$.
\end{proof}

\section{Counting the Number of $k$-typed parking functions}\label{question1}

In this section we present a major result in counting exact $k$-typed parking functions. Our inspiration is drawn from Dukes' paper \cite{science-DOI-duke}, in which he presented the following questions:
\begin{center}
    \begin{itemize}
        \item How many 2-tiered parking functions are there?
        \item How many 3-tiered parking functions of order $(a,b,c)$ are there?
    \end{itemize}
\end{center}
In essence, by us focusing on exact $k$-typed parking functions, we aim to create a more concrete and manageable framework for exploring the combinatorial properties of tiered parking functions to follow. This directly allows us to analyze relationships between preferences and parking configurations in a more precise way. We begin the proof for the number of exact $k$-typed parking functions when $M$ is of order $(m_1, m_2, \ldots, m_k)$ by establishing a result which counts the number of exact 2-typed parking functions as a base case, then further generalize to give us a closed formula.

\begin{theorem}\label{thm:all k}
    Let $M \in \mathbb{N}$ and let $(m_1,\ldots,m_k)$ be a composition of $M$. There are 
    \begin{align}
    &\prod_{i=2}^{k}\left(1+\sum_{j=1}^{i-1}m_j\right)^{m_i}\label{eq:thm 1}
    \end{align}
exact $k$-TPFs of order $(m_1,\ldots,m_k)$.
\end{theorem}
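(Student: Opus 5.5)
The plan is to identify the set of exact $k$-TPFs of a fixed order $(m_1,\ldots,m_k)$ with a Cartesian product of independent choice sets, and then count it.

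First I would characterize membership exactly. Lemma~\ref{lemma0.4} gives one direction: if every entry of $P_i$ lies in $\{0,1,\ldots,\sum_{j=1}^{i-1}m_j\}$, then $\alpha=(m_1;P_2,\ldots,P_k)$ is an exact $k$-TPF. For the converse, recall that when the type $i$ cars park, exactly $m_1+\cdots+m_{i-1}$ cars of lower type are already parked. A type $i$ car requiring \emph{exactly} $p^{(i)}_j$ such cars before it therefore needs $0\le p^{(i)}_j\le \sum_{j'=1}^{i-1}m_{j'}$. A value outside this range names a gap that does not exist, so no valid configuration can be produced. Together these show that the exact $k$-TPFs of the given order are precisely the tuples $(m_1;P_2,\ldots,P_k)$ with $P_i\in\{0,\ldots,S_{i-1}\}^{m_i}$, where $S_{i-1}=\sum_{j=1}^{i-1}m_j$.

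Next comes the count. For fixed $i$, each of the $m_i$ entries of $P_i$ is chosen independently from $S_{i-1}+1$ values. This gives $(1+S_{i-1})^{m_i}$ lists $P_i$, and the point is that the admissible range for type $i$ depends only on $m_1,\ldots,m_{i-1}$, not on the lower-type preference lists. Moreover, $m_1$ contributes a single choice, since type 1 cars carry no list. So the set is a product and its size is $\prod_{i=2}^k(1+S_{i-1})^{m_i}$, which is the expression in \eqref{eq:thm 1}.

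To match the paper's style I would present this as an induction on $k$. The base case $k=2$ is Lemma~\ref{lem:2color} plus its converse, giving $(m_1+1)^{m_2}$. For the inductive step, each exact $(k-1)$-TPF of order $(m_1,\ldots,m_{k-1})$ extends by any $P_k\in\{0,\ldots,M-m_k\}^{m_k}$. Distinct pairs give distinct tuples, so the count multiplies by $(M-m_k+1)^{m_k}$.

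The main obstacle is the converse direction and the independence claim: we must check that the number of available gaps for type $i$ is always $S_{i-1}+1$, whatever configuration the earlier types produced. I would justify this by noting that the gaps are indexed by how many lower-type cars precede a position, and that count runs through $0,\ldots,S_{i-1}$ in any arrangement of those cars. It is also worth stating explicitly that we count the tuples $\alpha$ themselves, not the resulting configurations. So parking permutations are counted separately here, and Theorem~\ref{thrm1} is not needed.
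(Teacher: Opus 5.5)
Your proposal is correct and follows essentially the same route as the paper: induction on the number of types, multiplying the count of exact $s$-TPFs by the $\bigl(1+\sum_{j=1}^{s}m_j\bigr)^{m_{s+1}}$ independent choices for the new preference list. Your explicit converse, that entries outside $\{0,\ldots,S_{i-1}\}$ are impossible, and your base-case range $\{0,\ldots,m_1\}$ are more careful than the paper's write-up. The paper's base case states the range as $0$ to $m_1-1$, which is inconsistent with $(1+m_1)^{m_2}$, and its inductive step writes $\sum_{j=1}^{k}$ where $\sum_{j=1}^{s}$ is meant.
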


\begin{proof}
An exact 2-TPF $\alpha = (m_1; P_2)$ order $(m_1, m_2)$ requires that all preference values $P_2$ for the $m_2$ cars of type 2 are between $0$ and $\sum_{j=1}^{2-1} m_j - 1 = m_1 - 1$. This means there are $m_1$ possible integer preference values for each of the $m_2$ cars (from $0$ to $m_1 - 1$ inclusive).
Now let's evaluate the formula from the claim for $k=2$: \begin{align*} \prod_{i=2}^{2}\left(1+\sum_{j=1}^{i-1}m_j\right)^{m_i} &= \left(1+\sum_{j=1}^{2-1}m_j\right)^{m_2} \ = \left(1+m_1\right)^{m_2} \end{align*} 
Now let's assume, by induction, that for some $s$ where $2 \le s < k$, the number of exact $s$-TPFs of order $(m_1, \dots, m_s)$ is given by 
\begin{align}
&\prod_{i=2}^{s}\left(1+\sum_{j=1}^{i-1}m_j\right)^{m_i}
\end{align}
and we want to show that for $s+1$, the number of exact $(s+1)$-TPFs is given by 
\begin{align}
&\prod_{i=2}^{s+1}\left(1+\sum_{j=1}^{i-1}m_j\right)^{m_i}
.\end{align}
Considering that the drivers of the cars of type $s+1$ each prefer to park after exactly some number of cars of types 1 through $k$, by definition, there are $\sum_{j=1}^{k}m_j$ cars of a lower type already parked. Thus, each car of type $s+1$ has $1+\sum_{j=1}^{k}m_j$ valid preference values. So, the number of ways to assign preferences to the $m_{s+1}$ cars of type $s+1$ is
\begin{align}
    \left(1+\sum_{j=1}^{k}m_j\right)^{m_{s+1}}
.\end{align}
If we recall the formula given for the exact $s$-TPFs for the first $s$ types, the total number of exact $(s+1)$-TPFs is 
\begin{align}
&\left[\prod_{i=2}^{s}\left(1+\sum_{j=1}^{i-1}m_j\right)^{m_i}\right] \times \left(1+\sum_{j=1}^{s}m_j\right)^{m_{s+1}} =
\prod_{i=2}^{s+1}\left(1+\sum_{j=1}^{i-1}m_j\right)^{m_i}
\end{align}

By the principle of induction the formula holds for all $s \ge 2$.
\end{proof}

\section{Cumulative Results on Mapping Families to Configurations}

Recall that in Section \ref{sect2}, we established that permuting the preference list entries of an exact $k$-typed parking function also results in an exact $k$-typed parking function. The initial parking function, along with its rearranged preference lists, then results in the same configuration $C$.

Consider the collection of parking permutations for a given parking function in which we formally define what it means to be a family. Our main result establishes that the size of each family is equivalent to the number of distinct resulting configurations.

\begin{definition}(Canonical Form).\label{definition:canonical form}
An exact $k$-TPF is in \textit{canonical form} if the preferences $p_j^{(i)}$ are listed in a weakly increasing order in each 
$P_i$ for $i=2,3,\ldots,k$.
\end{definition} 

\begin{example} The 4-TPF $\alpha = (4;(2,1,0,4,2), (8,2,1,2),(4,10,8))$ in its canonical form is $\alpha_{\text{canonical}}= (4,(0,1,2,2,4), (1,2,2,8), (4,8,10))$.
\end{example}

\begin{definition}(Family of $\alpha$).\label{definition:family}
Given an exact $k$-TPF $\alpha = (m_1; P_2,\ldots, P_k),$  we define \emph{family of $\alpha$}, which we denote fam$(\alpha)$, to be the set of all parking permutations of an exact $k$-TPF $\alpha$.
\end{definition}

Thus, as in Example \ref{ex:perm ex},

\begin{equation*}
\text{fam}(\alpha) = 
\left\{ 
    \begin {aligned}
    (4; (2,3,4)), \quad   (4; (3,2,4)), & \quad    (4; (4,2,3)),\\
 ( 4; (2,4,3)),\quad  (4; (3,4,2)),& \quad  (4; (4,3,2)) 
    \end{aligned}
\right\}.
\end{equation*}

The next result(s) gives the number of distinct parking configurations for each fam$(\alpha)$.

\begin{proposition}\label{prop2_1}
We fix $M$ and an order $(m_1,m_2,\dots,m_k)$. Every parking configuration $C$ corresponds to exactly one set fam($\alpha$), where $\alpha$ is some exact $k$-TPF of order $(m_1,m_2,\dots,m_k)$.
\end{proposition}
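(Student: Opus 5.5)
The plan is to build an explicit bijection between parking configurations of order $(m_1,\dots,m_k)$ and families of exact $k$-TPFs. A parking configuration $C$ is a word of length $M$ in which the letter $i$ appears $m_i$ times. Theorem~\ref{thrm1} tells us that every member of a family fam$(\alpha)$ produces the same configuration. So $\Phi(\mathrm{fam}(\alpha)) := C(\alpha)$ is a well-defined map from families to configurations. It remains to show that $\Phi$ is surjective and injective.

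\textbf{Surjectivity (reading off preferences).} Given a configuration $C$, I will construct a canonical $\alpha$ as follows.
\begin{enumerate}
\item For each $i\ge 2$ and each occurrence of the letter $i$ in $C$, record the number of letters from $\{1,\dots,i-1\}$ that appear to its left in $C$.
\item Listing these counts in order of occurrence gives a weakly increasing list $P_i$. Its entries lie between $0$ and $\sum_{j<i}m_j$, so by Lemma~\ref{lemma0.4} the tuple $\alpha=(m_1;P_2,\dots,P_k)$ is an exact $k$-TPF in canonical form.
\end{enumerate}
I then check that $\alpha$ parks to $C$, by induction on $i$. Delete from $C$ all letters greater than $i$, and call the resulting word the restriction of $C$ to types $\le i$. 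By the inductive hypothesis, parking types $1,\dots,i-1$ yields the restriction of $C$ to types $\le i-1$. In the simultaneous-parking description of Example~\ref{example3}, the type $i$ cars with preference $\ell$ go into gap $\ell$. This is exactly where the letters $i$ sit in the restriction of $C$ to types $\le i$.

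\textbf{Injectivity.} Suppose $\alpha$ and $\beta$, of the same order, both park to $C$. Exactness means each type $i$ car sits in a gap whose index equals its preference. Hence the multiset of entries of $P_i$ is determined by $C$: it is the multiset of counts computed in step 1 above. So $P_i$ and $Q_i$ are rearrangements of each other for every $i$, and $\beta\in\mathrm{fam}(\alpha)$. This argument also shows that the families partition the set of exact $k$-TPFs of the given order, since each family is exactly the set of permutations of its canonical representative.

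The main obstacle is making the parking process precise enough to support the claim that each type $i$ car lands in the gap indexed by its preference, after all lower types are placed. In particular, the notion of ``gap'' must be stable when higher-type cars are inserted later. I would handle this by working with the restrictions of $C$ to types $\le i$, as in the surjectivity step. Inserting cars of type greater than $i$ does not change the relative order of the cars of type $\le i$, so preference counts computed in the final configuration agree with those at the moment of parking. One caveat: the definition of parking permutation literally requires at least one nontrivial rearrangement. I will interpret fam$(\alpha)$ as including $\alpha$ itself, consistent with the example listing fam$(\alpha)$.
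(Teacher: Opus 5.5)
Your proof is correct. Its existence half matches the paper's: for each type-$i$ car, read off the index of the gap it occupies among the lower-type cars, and list these indices in weakly increasing order. You go a bit further than the paper here. You check, via the restrictions of $C$ to types $\le i$, that the constructed $\alpha$ really parks to $C$, which the paper only asserts.

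The uniqueness half takes a genuinely different route. The paper argues by contradiction from Theorem~\ref{thrm1}. But that theorem says only that rearranging the preference lists does not change the configuration; in your setup it is exactly what makes $\Phi$ well defined. Uniqueness needs the converse: two exact $k$-TPFs of the same order with the same configuration must have rearranged preference lists. The paper's step ``since each parking permutation results in the same configuration, $\mathrm{fam}(\alpha)=\mathrm{fam}(\gamma)$'' does not deliver that converse. The corollary right after Theorem~\ref{thrm1} states it, but does not really justify it.

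Your argument proves the converse directly. Each car's preference can be recovered from the final word as the number of lower-type letters to its left. Later insertions of cars of equal or higher type do not change that count. So your route is the one that actually closes the argument; the paper's version is shorter only because it leaves this direction unargued.

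One sharpening of your caveat. Besides including $\alpha$ in $\mathrm{fam}(\alpha)$, you should read ``parking permutation'' as rearranging \emph{every} $P_i$, not ``at least one'' as the definition literally says. The paper's proof of Theorem~\ref{thrm1} uses this reading. Under the literal reading, a family need not produce a single configuration, and your $\Phi$ would not be well defined.
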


\begin{proof}
We begin by establishing the existence of a family given a configuration.
Consider a valid parking configuration $C$. 
We can find $m_1$ through $m_k$ by respectively counting the number of cars of types $1$ through $k$ that appear in $C$. 
In order to build $P_i$ for $i=2,\dots, k$, consider the available parking positions for cars of type $i$. These range from $0$ to $\mu_i = \displaystyle\sum_{j=1}^{i-1}m_j$ since each type $i$ car can park either before or after any car from a lower type based on their preference. 
$P_i$ is given by writing out the tuple $(p_1^{(i)},  p_2^{(i)}, \ldots, p_{m_i}^{(i)})$  where each entry of the tuple is given by listing out in (weakly) increasing order of the gaps $0$ to $\mu_i$ that each car is parked in. The result is a preference list $P_i$, and $\alpha$ in the form $\alpha = (m_1; P_2, \cdots, P_{k})$ has been constructed to generate the configuration $C$.
\begin{figure}[h]
    \centering
    \boxed{\textcolor{black}{2}}\boxed{\textcolor{black}{1}}\boxed{\textcolor{black}{1}}\boxed{\textcolor{black}{2}}\boxed{\textcolor{black}{1}}\boxed{\textcolor{black}{2}}\boxed{\textcolor{black}{1}}\boxed{\textcolor{black}{3}}\boxed{\textcolor{black}
    {1}}\boxed{\textcolor{black}
    {3}}\boxed{\textcolor{black}
    {1}}\boxed{\textcolor{black}
    {4}}\boxed{\textcolor{black}
    {3}}\boxed{\textcolor{black}
    {1}}\boxed{\textcolor{black}
    {4}}\\
    \vspace{0.5 cm}
    $\alpha = (7; (0,2,3), (7,8,10), (11,13))$
    \caption{Example of parking configuration \& an element in fam($\alpha$)}
    \label{fig:my_label}
\end{figure}

Now suppose the parking configuration $C$ is realized by fam($\alpha$) and fam($\gamma$), where $\text{fam}(\alpha) \neq \text{fam}(\gamma)$. Take $\alpha_i \in \text{fam}(\alpha)$ and $\gamma_i \in \text{fam}(\gamma)$ to be respectively parking permutations of the exact $k$-TPFs $\alpha$ and $\gamma$. By Theorem \ref{thrm1}, $\alpha_i$ and $\gamma_i$ must result in the same configuration $C$. Since each parking permutation results in the same configuration, $\text{fam}(\alpha)=\text{fam}(\gamma)$. This contradicts the assumption that fam($\alpha$) is not equivalent to fam($\gamma$), thus every configuration $C$ has one unique family. 
\end{proof}

\begin{corollary}\label{corollary3}
Let $M \in \mathbb{N}$ and let $\alpha$ be an exact $k$-TPF of order $(m_1, m_2, \ldots, m_k)$ be fixed. Then every fam($\alpha$) maps to exactly one parking configuration $C$.
\end{corollary}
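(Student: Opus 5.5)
We read Corollary~\ref{corollary3} as saying that the assignment $\text{fam}(\alpha)\mapsto C$ is a well-defined map. It sends each family of exact $k$-TPFs of the fixed order $(m_1,\ldots,m_k)$ to a single resulting configuration. The plan has two parts. First, show that every member of $\text{fam}(\alpha)$ produces the same configuration. Second, note that this configuration depends only on the family, not on the representative $\alpha$ chosen to name it.

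For the first part, we would argue directly from the parking procedure rather than leaning only on the wording of Theorem~\ref{thrm1}. Fix $\beta=(m_1;Q_2,\ldots,Q_k)\in\text{fam}(\alpha)$, so each $Q_i$ is a rearrangement of $P_i$. Induct on the type $i$, using the simultaneous parking procedure of Example~\ref{example3}.
\begin{itemize}
\item After types $1,\ldots,i-1$ are placed, we assume the partial configurations for $\alpha$ and $\beta$ coincide.
\item Each type-$i$ car with preference $p$ is placed in gap $p$, meaning after exactly $p$ lower-type cars.
\item The partial configuration after type $i$ is therefore determined by the multiplicity function $p\mapsto \#\{j : p^{(i)}_j=p\}$ on $\{0,\ldots,\sum_{j<i}m_j\}$. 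This is the multiset of entries of $P_i$, which equals the multiset of entries of $Q_i$.
\item Cars of type $i$ are indistinguishable in the configuration, so their order inside a gap is irrelevant.
\end{itemize}
This gives the same partial configuration for $\alpha$ and $\beta$ after every type, hence the same final configuration $C$. This is Theorem~\ref{thrm1} applied to every element of the family, and we would cite it there.

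For well-definedness, observe that the relation ``is a parking permutation of'' (allowing the identity) is an equivalence relation, since composing or inverting rearrangements of each $P_i$ is again a rearrangement. So if $\text{fam}(\alpha)=\text{fam}(\gamma)$, then $\gamma$ lies in $\text{fam}(\alpha)$. By the first part, $\alpha$ and $\gamma$ yield the same $C$. Existence of at least one configuration is guaranteed because every element is an exact $k$-TPF (Lemma~\ref{lemma0.4}), so parking succeeds. Combined with Proposition~\ref{prop2_1}, this shows the map is a bijection between families and configurations of the given order.

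The main obstacle is the first part: stating precisely that the configuration depends only on the multisets of preferences. We would handle it by working with the canonical form of Definition~\ref{definition:canonical form}. Every member of $\text{fam}(\alpha)$ has the same canonical form, and parking that canonical form gap by gap produces $C$ explicitly.
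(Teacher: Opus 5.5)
Your proposal is correct and takes essentially the same route as the paper, whose proof simply appeals to the fact that every member of fam$(\alpha)$ produces the same configuration (Theorem~\ref{thrm1}); your type-by-type argument that the partial configuration depends only on the multiset of each $P_i$ is a more explicit version of that theorem's proof. The representative-independence check and the closing bijection claim are not needed for this corollary. Note also that the bijection rests on the uniqueness half of Proposition~\ref{prop2_1}, which is really justified by reading the multiset of each $P_i$ off $C$ (count the cars of type less than $i$ preceding each type-$i$ car), not by Theorem~\ref{thrm1}.
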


\begin{proof}

We begin by taking fam$(\alpha)$ to be the collection of parking permutations of $\alpha=(m_1; P_2, P_3, \dots, P_k).$ 
Suppose for the sake of contradiction that there exists some parking permutations $\alpha_1, \alpha_2\in$ fam$(\alpha)$ such that $\alpha_1$ maps to the configuration $C_1$ and $\alpha_2$ maps to the configuration $C_2$, where $C_1\neq C_2.$ This is not possible since every element within a family must correspond to the same configuration. Hence, we have that $C_1=C_2$, and every fam($\alpha$) maps to a unique parking configuration $C$ as desired.
\end{proof}

\begin{lemma}\label{config_single_type}
Let $\mu_i=\displaystyle\sum_{j=1}^{i-1}m_j$.
Given a fixed $M$ and order $(m_1,m_2,\ldots,m_k)$, assume that the cars of type 1 through $i-1$ are already parked. Then there exists ${\mu_i+m_i \choose m_i}$ distinct ways to construct the preference list $P_i$ for 
$i\in \{2, \ldots, k\}.$
\end{lemma}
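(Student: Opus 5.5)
The statement counts the preference lists $P_i$ \emph{up to rearrangement}, that is, the distinct ways the $m_i$ cars of type $i$ can be distributed into the gaps once types $1,\dots,i-1$ are parked. By Theorem~\ref{thrm1} and Proposition~\ref{prop2_1}, such a placement is determined by the multiset of preferences, so the lemma reduces to counting multisets of size $m_i$ drawn from the $\mu_i+1$ gaps $\{0,1,\dots,\mu_i\}$. The plan is to identify each such multiset with a unique canonical (weakly increasing) list, as in Definition~\ref{definition:canonical form}, and then count these by stars and bars.

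\textbf{Step 1 (reduction to canonical lists).} The cars of types $1,\dots,i-1$ are fixed, so there are $\mu_i$ cars of lower type and hence $\mu_i+1$ gaps, labelled $0,\dots,\mu_i$. By Lemma~\ref{lemma0.4}, every list with entries in this range is valid. By Theorem~\ref{thrm1}, two lists that are rearrangements of one another produce the same placement of the type $i$ cars. Conversely, if the multisets differ, then some gap receives a different number of type $i$ cars, so the configurations differ. This is the content behind Proposition~\ref{prop2_1} and Corollary~\ref{corollary3}, restricted to type $i$. Hence the distinct $P_i$ correspond bijectively to weakly increasing sequences $0\le p_1^{(i)}\le\cdots\le p_{m_i}^{(i)}\le\mu_i$.

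\textbf{Step 2 (stars and bars).} Encode such a sequence by the occupancy vector $(a_0,\dots,a_{\mu_i})$, where $a_\ell$ is the number of type $i$ cars in gap $\ell$. These vectors are exactly the nonnegative integer solutions of $a_0+\cdots+a_{\mu_i}=m_i$. Now write a word of $m_i$ stars (the cars) and $\mu_i$ bars (the lower-type cars, which act as separators). This word is literally the left-to-right pattern of type $i$ cars versus lower-type cars on the street. Such words are determined by choosing the positions of the $m_i$ stars among $\mu_i+m_i$ slots, which gives $\binom{\mu_i+m_i}{m_i}$.

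\textbf{Main obstacle.} The counting itself is routine; the delicate point is Step 1. One must make precise that the lemma counts lists modulo rearrangement, since ordered lists number $(\mu_i+1)^{m_i}$ by Theorem~\ref{thm:all k}. One must also justify injectivity: different multisets give different configurations. I would argue injectivity directly from the bar–star word. Reading off the configuration restricted to types $<i$ and type $i$ recovers each $a_\ell$ as the number of type $i$ cars between the $\ell$-th and $(\ell+1)$-th lower-type car. Therefore the map from canonical $P_i$ to the placement is a bijection onto the words, which completes the argument.
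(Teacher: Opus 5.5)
Your proposal is correct and follows essentially the same route as the paper. Both reduce, via Theorem~\ref{thrm1} and Proposition~\ref{prop2_1}/Corollary~\ref{corollary3}, to counting placements of $m_i$ indistinguishable type-$i$ cars into $\mu_i+1$ distinguishable gaps, and then apply stars and bars to get $\binom{m_i+(\mu_i+1)-1}{(\mu_i+1)-1}=\binom{\mu_i+m_i}{m_i}$. The only differences are that the paper cites an external lemma for the count and nominally frames it as an induction from $i=2$, whereas you prove the count directly for general $i$ and make explicit the ``up to rearrangement'' reading and the injectivity, which the paper leaves implicit.
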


\begin{proof}
We will proceed by induction. By Proposition \ref{prop2_1} and Corollary \ref{corollary3}, we see that the distinct number of families of an exact $k$-TPF $\alpha$ can be given by the number of configurations $C$ for $m_1+m_2$. After cars of type 1 park, we have $m_1+1$ distinguishable gaps for the cars of type 2 to park, which become indistinguishable amongst each other. By invoking [\cite{science-DOI-ved}, Lemma 2.1], we have
\[{m_2+(m_1+1)-1 \choose (m_1+1)-1}= {m_1+m_2 \choose m_1}\]resulting configurations for an exact $2$-TPF $\alpha$ with $m_1$ cars parked. 

We now extend our result to the $i=k$ type, where the $m_i$ cars of type $i$ are indistinguishable.
There are ${\mu_i+1}$ distinguishable parking gaps available, and therefore, \[{m_i+(\mu_i + 1)-1 \choose (\mu_i+1)-1} = {m_i + \mu_i \choose \mu_i} = {\mu_{i+1} \choose \mu_i}\] resulting configurations possible for a given $i$ and fixed $k$.
\end{proof}

\noindent Thus, we have shown that each distinct fam($\alpha$) corresponds to exactly one configuration. We will proceed to show that the total number of resulting configurations for a fixed $M$ and order $(m_1, m_2, \ldots, m_k)$. 

\begin{theorem}\label{product_theorem}
The set of exact $k$-TPFs of order $(m_1,m_2,\ldots,m_k)$ result in
 \begin{align}
&L=\prod_{i=1}^k {m_1+\cdots + m_i \choose m_i}\label{eq:thm 2}
 \end{align}
distinct parking configurations.

\end{theorem}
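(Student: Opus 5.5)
We count configurations by building a configuration of order $(m_1,\ldots,m_k)$ one type at a time and multiplying the number of choices made at each stage. By Proposition~\ref{prop2_1} and Corollary~\ref{corollary3}, configurations are in bijection with families, and each family has a unique canonical-form representative (Definition~\ref{definition:canonical form}). So it suffices to count canonical exact $k$-TPFs of the given order, that is, tuples $(m_1;P_2,\ldots,P_k)$ in which each $P_i$ is a weakly increasing sequence of length $m_i$ with entries in $\{0,1,\ldots,\mu_i\}$, where $\mu_i=\sum_{j=1}^{i-1}m_j$. Lemma~\ref{lemma0.4} guarantees that every such tuple is an exact $k$-TPF.

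\textbf{The type-1 factor.} The $i=1$ factor of $L$ is $\binom{m_1}{m_1}=1$. This matches the fact that the $m_1$ type-1 cars are indistinguishable and there is only one way to place them before any other type is added.

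\textbf{Type $i\ge 2$, given the earlier types.} Suppose the cars of types $1$ through $i-1$ are already parked in some fixed configuration. Lemma~\ref{config_single_type} says there are $\binom{\mu_i+m_i}{m_i}$ ways to choose $P_i$ in canonical form. I would re-derive this as a stars-and-bars count: $m_i$ indistinguishable cars are distributed among $\mu_i+1$ gaps, which gives $\binom{m_i+\mu_i}{\mu_i}=\binom{m_1+\cdots+m_i}{m_i}$. This number does not depend on how the cars of types $<i$ are arranged; it depends only on how many there are.

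\textbf{Multiplying the stages.} The choices of $P_2,\ldots,P_k$ are independent, so the product rule gives $\prod_{i=1}^k\binom{m_1+\cdots+m_i}{m_i}$ canonical tuples. I would organize this as an induction on $k$, mirroring the proof of Theorem~\ref{thm:all k}, with the inductive step multiplying by the factor from Lemma~\ref{config_single_type}.

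\textbf{Main obstacle: injectivity.} One must check that distinct canonical tuples give distinct configurations, so that the count of tuples is really the count of configurations. The plan is to recover the tuple from a configuration $C$. Delete from $C$ all cars of type $>i$; in what remains, record for each type-$i$ car the number of cars of type $<i$ to its left. This recovers the multiset underlying $P_i$, because cars of higher type do not change which gap a type-$i$ car occupies relative to lower-type cars. It also shows that the restricted configuration of types $\le i$ is determined by $C$.

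A sanity check follows from multiplying out the product: $L=\binom{M}{m_1,\ldots,m_k}$. This is the number of words with $m_i$ letters equal to $i$, and every such word does arise, which confirms the bijection.
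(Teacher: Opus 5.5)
Your proposal is correct and follows essentially the same route as the paper: you count canonical preference lists $P_i$ type by type using stars and bars, as in Lemma~\ref{config_single_type}, and multiply the independent factors to obtain $L$. Your explicit injectivity step (deleting cars of type $>i$ from $C$ and recording how many lower-type cars precede each type-$i$ car) proves something the paper only asserts through Proposition~\ref{prop2_1}, and the identity $L=\binom{M}{m_1,\ldots,m_k}$ gives a useful independent check.
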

\begin{proof}
    By Lemma \ref{config_single_type}, we established that for a fixed $k$, the $m_i$ cars of type $i$ have $\binom{m_i +\mu_i}{\mu_i}$ ways to park. As the preference list $P_i$ is given independently of cars of type less that $i$, we have $\prod_{i=1}^k {\sum_{j=1}^i m_j \choose m_i}$ many choices for $(m_1,P_2,...,P_k).$
\end{proof}

\section{Counting groupings of $k$-typed parking functions}

\indent Now we consider counting the amount of elements in each family given $M$ and $\alpha = (m_1, P_2, \ldots, P_k)$.
This allows us to develop another way of counting $k$-typed parking functions since families are distinct subsets of the total number of exact $k$-TPFs.
The sum of the amounts of $k$-typed parking functions among all families is the same as the total number of $k$-typed parking functions.
Before presenting these results, we must introduce the notion of a multiset and our modification of it, which simplifies the computations that must be done.

\begin{definition}(Multiset).\label{definition:multiset}
A \textit{multiset} is a set that allows for repeated values. 
Multisets can be written in a more condensed notation using ordered pairs in the form $\{(x_0, a_0), (x_1, a_1), \dots, (x_n, a_n)\}$, where each $a_i$ represents the number of $x_i$ in the set for $i=0,\,1,\dots, n$. 
\end{definition}

\subruninhead{Notation} The \textit{generative multiset of $\alpha$} will be denoted  $GM(\alpha)$ to represent the set $\{\mult(P_2),\, \mult(P_3), \,\dots, \mult(P_k)\}$. We also use $\mult(P_i)$ to be the multiset associated to $P_i$ such that \[\mult(P_i)=\{(0,a_{i_0}), (1, a_{i_1}), \dots, (\mu_i, a_{i_{\mu_i}})\}, \quad \mbox{ where } \displaystyle \mu_i=\sum_{j=1}^{i-1}m_j.\]

The first entry of each ordered pair $(j, a_{i_j})$ in $\mult(P_i)$ denotes the index of the parking gap, where $j$ ranges from $0$ to $\mu_i$. We get $\mu_i$ by considering the number of cars of type $1, 2, \ldots, i-1$ that have already parked before the cars of type $i$ start parking. 
As the order of an exact $k$-TPF $\alpha$ is $(m_1, m_2, \ldots, m_k)$, there are $m_1$ cars of type 1, $m_2$ cars of type 2, $\ldots$, and $m_{i-1}$ cars of type $i-1$ that are parked when cars of type $i$ start parking.
Therefore, when cars of type $i$ start to park there are $\displaystyle\sum_{j=1}^{i-1}m_j$ cars that are parked. This sum is denoted by $\mu_i$.

\begin{example}
Take $\alpha =(5; (2, 1, 3), \,(4, 0, 3)).$ 
Then the corresponding parking configuration is
\begin{align*}
    mult(P_2)&= \{(0,0), (1,1), (2, 1), (3, 1), (4, 0),(5, 0))\}\\
    &=\{(1,1), (2, 1), (3, 1)\}\\
    mult(P_3)&= \{(0,1), (1,0), (2, 0), (3, 1), (4, 1), (5,0), (6, 0), (7, 0), (8,0)\}\\
    &=\{(0,1), (3,1), (4,1)\}.
\end{align*}
So the generative multiset of this $\alpha$ is 
\[GM(\alpha) = \{\{(1,1), (2, 1), (3, 1)\}, \{(0,1), (3,1), (4,1)\}\}.\]
\end{example}

Having established the notation, we now prove a couple properties that connect families to generative multisets. This allows us to show a second way of counting all the $k$-TPF $\alpha$. 

\begin{lemma}
Given a $k$-TPF $\alpha=(m_1; P_2, P_3, \ldots, P_k)$ of order $(m_1,m_2,\dots,m_k)$, parking permutations of the same preference list $P_i$ result in the same generative multiset.
\end{lemma}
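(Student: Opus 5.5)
The plan is to show that $\mult(P_i)$ depends only on the multiset of entries of $P_i$, not on their order, and then assemble the statement coordinate by coordinate in $GM(\alpha)$. Fix $i\in\{2,\ldots,k\}$ and let $Q_i$ be a rearrangement of $P_i$. Concretely, there is a permutation $\sigma$ of $\{1,\ldots,m_i\}$ with $q_j^{(i)}=p_{\sigma(j)}^{(i)}$ for all $j$.

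For each gap index $\ell\in\{0,1,\ldots,\mu_i\}$, the multiplicity in $\mult(P_i)$ is
\[
a_{i_\ell}(P_i)=\#\{\,j\in\{1,\ldots,m_i\} : p_j^{(i)}=\ell\,\}.
\]
Since $\sigma$ is a bijection, it maps $\{j: q_j^{(i)}=\ell\}$ bijectively onto $\{j: p_j^{(i)}=\ell\}$. Hence $a_{i_\ell}(Q_i)=a_{i_\ell}(P_i)$ for every $\ell$. The index range $0,\ldots,\mu_i$ is the same for both lists, because $\mu_i=\sum_{j<i}m_j$ depends only on the order $(m_1,\ldots,m_k)$, which a parking permutation does not change. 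So the ordered pairs $(\ell,a_{i_\ell})$ coincide, and $\mult(Q_i)=\mult(P_i)$.

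Now let $\beta=(m_1;Q_2,\ldots,Q_k)$ be a parking permutation of $\alpha$, so each $Q_i$ is a rearrangement of $P_i$, possibly the identity rearrangement. Applying the previous step for every $i=2,\ldots,k$ gives $\mult(Q_i)=\mult(P_i)$ for each $i$. Therefore
\[
GM(\beta)=\{\mult(Q_2),\ldots,\mult(Q_k)\}=\{\mult(P_2),\ldots,\mult(P_k)\}=GM(\alpha).
\]
Lemma~\ref{lemma0.4} guarantees that $\beta$ is still an exact $k$-TPF, since its entries lie in the same ranges as those of $\alpha$. Thus $GM(\beta)$ is defined in the same setting as $GM(\alpha)$.

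The only real obstacle is one of interpretation. $GM(\alpha)$ should be read as indexed by type $i$, that is, as the tuple of $\mult(P_i)$ in order, rather than as an unordered set. I would state this convention explicitly, because equality then has to hold type by type, which is exactly what the argument above gives. Everything else is the observation that counting occurrences of a value is invariant under reindexing.
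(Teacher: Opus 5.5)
Your proof is correct and is essentially the paper's argument: both rest on the fact that the number of entries of $P_i$ equal to each gap index $\ell$ is unchanged by rearrangement. Your explicit bijection $\sigma$ and your type-by-type assembly into $GM$ make precise what the paper states informally for a single list, and the ordered-tuple convention you flag is not needed for this lemma, since $\mult(Q_i)=\mult(P_i)$ for every $i$ already gives equality of the sets (it only becomes essential for the converse direction in Proposition~\ref{mult_fam}).
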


\begin{proof}
Consider the preference list $P_i = \left(p_{1}^{(i)}, \ldots, p_{m_i}^{(i)}\right)$ with associated multiset $\{(0, a_0), \ldots, (n, a_n)\}$.
A parking permutation $P_j = \left(p_{1}^{(j)}, \ldots, p_{m_j}^{(j)}\right)$ of $P_i$ has associated multiset $\{(0, b_0), \ldots, (n, b_n)\}$. The first entries in the multiset correspond to the indices of parking spots and the second indicate how many cars of type $i$ want to be parked at that index.

For each car of type $\{2,\dots, k\}$, the multiset has information on how many cars from each type $i$ are parked in each parking gap. 
Since $P_j$ is a parking permutation of $P_i$, it still contains the same parking preferences, although likely in a different order. 
This means that if $a_i$ cars from $P_i$ parked in gap $i$, then $a_i$ cars from $P_j$ parked in gap $i$ as well. 
This is true for all $0\leq i\leq n$, which means that $a_i = b_i \forall i$ in the range $0$ through $n$. Since all the respective multiset elements are the same, the multisets themselves are the same, and so parking permutations of the same list of preferences result in the same multiset.
\end{proof}

\begin{proposition}\label{fam_mult}
{Every fam($\alpha$) corresponds to exactly one generative multiset.} 
\end{proposition}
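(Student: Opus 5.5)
We would prove Proposition~\ref{fam_mult} by showing that the map $\text{fam}(\alpha)\mapsto GM(\alpha)$ is well defined and that different families have different generative multisets. The idea is that a family is determined exactly by the multisets of its preference lists. Throughout we fix $M$ and the order $(m_1,\ldots,m_k)$, so every $\mu_i=\sum_{j=1}^{i-1}m_j$ is fixed.

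\emph{Well-definedness.} Let $\beta=(m_1;Q_2,\ldots,Q_k)\in\text{fam}(\alpha)$. By Definition~\ref{definition:permutation}, each $Q_i$ is a rearrangement of $P_i$. The preceding lemma then gives $\mult(Q_i)=\mult(P_i)$ for every $i$, so $GM(\beta)=GM(\alpha)$. Hence every member of the family produces the same generative multiset, and we may assign this common value to $\text{fam}(\alpha)$.

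\emph{Distinct families give distinct multisets.} Suppose $GM(\alpha)=GM(\gamma)$, where $\alpha=(m_1;P_2,\ldots,P_k)$ and $\gamma=(m_1;R_2,\ldots,R_k)$.
\begin{enumerate}
\item First we must read $GM$ as an indexed tuple $(\mult(P_2),\ldots,\mult(P_k))$, not an unordered set. The $i$-th entry carries the range $0,\ldots,\mu_i$, which identifies the type when the $m_i$ are positive; we will fix this convention explicitly.
\item With that convention, $\mult(P_i)=\mult(R_i)$ for each $i$. So for every gap $j$, the value $j$ occurs the same number of times in $P_i$ as in $R_i$.
\item Two finite lists with identical multiplicities are rearrangements of one another. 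Concretely, both sort to the same weakly increasing list, namely the canonical form of Definition~\ref{definition:canonical form}.
\item Therefore each $R_i$ is a rearrangement of $P_i$, so $\gamma\in\text{fam}(\alpha)$. By symmetry of the permutation relation, $\text{fam}(\gamma)=\text{fam}(\alpha)$.
\end{enumerate}

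We expect the main obstacle to be a definitional subtlety rather than the combinatorics. Definition~\ref{definition:permutation} literally asks only that \emph{at least one} $Q_i$ be a rearrangement. For families to be the equivalence classes we want, we must read it as requiring all $Q_i$ to be rearrangements, which is the reading used in the proof of Theorem~\ref{thrm1}. We must also decide whether $\alpha$ itself belongs to $\text{fam}(\alpha)$. We will state this reading at the start of the proof. Under it, the relation ``same multiset in every coordinate'' is an equivalence relation whose classes are exactly the families, and the proposition follows. Combined with Proposition~\ref{prop2_1}, this also identifies generative multisets with parking configurations.
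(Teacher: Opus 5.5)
Your well-definedness half is exactly the paper's argument, and it is all the paper proves here: any member of the family produces a generative multiset, and the preceding lemma (rearranging $P_i$ does not change $\mult(P_i)$) shows all members produce the same one. Your second half, that distinct families have distinct generative multisets because equal multiplicities sort to the same canonical form, is correct. It is really the content of the next result, Proposition~\ref{mult_fam}, where your argument is more direct than the paper's. Your indexed-tuple convention is genuinely needed for that second half: read as unordered sets in the condensed notation, $GM$ does not separate $(2;(0),(1))$ from $(2;(1),(0))$.
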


\begin{proof}
For existence, consider any $\alpha = (m_1; P_2,\dots, P_k)\in fam(\alpha)$. 

To form the generative multiset for a given family, each $P_i$ can be written as a multiset by counting the number of cars with each preference. 
Then form ordered pairs as $(j, a_{i_j})$, where $j$ corresponds to the index of the parking gap and $a_{i_j}$ corresponds with the number of cars in $P_i$ where parking gap $j$ is their preference.

As a result the multiset associated with $P_i$ is of the form \[mult(P_i) = \{(0, a_{i_0}), \ldots, (\mu_i, a_{i_{\mu_i}})\},\] where $\mu_i = \sum_{n=1}^{i-1} m_n$.

We do this for every $P_i$ and place the results into the set, \[GM(\alpha) = \{mult(P_2), mult(P_3), \ldots, mult(P_k)\},\]
which follows from the definition of the generative multiset of $\alpha$.

In order to prove uniqueness, assume by way of contradiction that fam($\alpha$) corresponds to more than one generative multiset. 
Specifically, let fam($\alpha$) correspond to both $GM(\alpha_1) = \{\mult(P_2^1), \mult(P_3^1), \ldots, \mult(P_k^1)\}$ and $GM(\alpha_2) = \{mult(P_2^2), mult(P_3^2), \ldots, mult(P_j^2)\}$, where $GM(\alpha_1) \neq GM(\alpha_2)$. 
First note that since $\alpha$ has $k$ elements for an exact $k$-TPF, $GM(\alpha_1)$ and $GM(\alpha_2)$ must both have $k$ elements as well.
$GM(\alpha_1)$ and $GM(\alpha_2)$ are sets, and since we know that $|GM(\alpha_1)| = |GM(\alpha_2)|$, at least one of the corresponding $mult(P_i)$ elements from $GM(\alpha_1)$ and $GM(\alpha_2)$ must be such that $mult(P_i^1) \neq mult(P_i^2)$. 
This leads to a contradiction because fam($\alpha$) is by definition the set containing all parking permutations of an exact $k$-TPF $\alpha$. 
It was previously shown that parking permutations of the same list of preferences result in the same multiset, and so the preference lists associated with $mult(P_i^1)$ and $mult(P_i^2)$ must be parking permutations of one another since they take into account the same type, which means that their associated multisets must be the same.
\end{proof}

\begin{proposition} \label{mult_fam}
Given an exact $k$-TPF $\alpha$ of order $(m_1,m_2,\dots,m_k)$, every generative multiset of $\alpha$ corresponds to exactly one fam($\alpha$). \label{prop2_2} 
\end{proposition}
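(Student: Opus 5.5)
To prove Proposition~\ref{mult_fam}, I would show that the generative multiset determines the family completely. This gives a map from generative multisets to families which is inverse to the correspondence in Proposition~\ref{fam_mult}. The plan has two parts. \emph{Existence}: starting from a generative multiset, build an exact $k$-TPF whose family realizes it. \emph{Uniqueness}: show that any two exact $k$-TPFs with the same generative multiset are parking permutations of one another, so they lie in the same family.

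For existence, let $G=\{\mult(P_2),\dots,\mult(P_k)\}$ with $\mult(P_i)=\{(0,a_{i_0}),\dots,(\mu_i,a_{i_{\mu_i}})\}$. Since $G$ arises from an exact $k$-TPF of order $(m_1,\dots,m_k)$, we have $\sum_j a_{i_j}=m_i$. For each $i$ I would form the weakly increasing tuple $\widetilde P_i$ in which each gap index $j$ appears exactly $a_{i_j}$ times. Every entry lies in $\{0,\dots,\mu_i\}$, so Lemma~\ref{lemma0.4} says $\widetilde\alpha=(m_1;\widetilde P_2,\dots,\widetilde P_k)$ is an exact $k$-TPF. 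It is in canonical form (Definition~\ref{definition:canonical form}), and by construction $GM(\widetilde\alpha)=G$. Hence $G$ corresponds to $\mathrm{fam}(\widetilde\alpha)$.

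For uniqueness, suppose $\beta=(m_1;Q_2,\dots,Q_k)$ and $\gamma=(m_1;R_2,\dots,R_k)$ both have generative multiset $G$. The key fact is elementary: two finite tuples have the same multiset of entries exactly when one is a rearrangement of the other. Applying this for each $i$, the equality $\mult(Q_i)=\mult(R_i)$ gives that $R_i$ is a rearrangement of $Q_i$. So $\gamma$ is a parking permutation of $\beta$ in the sense of Definition~\ref{definition:permutation}, with the trivial case $\gamma=\beta$ allowed. Both families are therefore equal to $\mathrm{fam}(\widetilde\alpha)$, the family of the common canonical form. Theorem~\ref{thrm1} then confirms that they also give the same configuration, which is consistent with Proposition~\ref{prop2_1}.

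The main obstacle is a bookkeeping subtlety in how $GM$ is defined. It is written as a set of multisets, so the type label $i$ attached to each $\mult(P_i)$ could be lost, for instance if $\mult(P_2)=\mult(P_3)$ as multisets. I would handle this by treating $GM(\alpha)$ as the ordered $(k-1)$-tuple $(\mult(P_2),\dots,\mult(P_k))$, or equivalently by indexing each multiset by $i$. The natural ranges $0\le j\le\mu_i$ already make these distinguishable. I would also take the family relation to include the identity permutation, so that "same family" is an equivalence relation and the map $G\mapsto\mathrm{fam}(\widetilde\alpha)$ is well defined.
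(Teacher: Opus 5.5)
Your proof is correct. Its outline matches the paper's: recover a representative from $GM(\alpha)$, then show that anything with the same generative multiset lies in the same family. The uniqueness step, however, is argued differently.

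The paper argues by contradiction. It takes $\beta\notin\mathrm{fam}(\alpha)$ built from $GM(\alpha)$, observes $GM(\beta)=GM(\alpha)$, and calls this incompatible with $\mathrm{fam}(\beta)\neq\mathrm{fam}(\alpha)$. That step silently assumes that equal generative multisets force equal families, which is the proposition itself.

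Your direct argument supplies the missing justification. Equality of $\mult(Q_i)$ and $\mult(R_i)$ means exactly that $R_i$ is a rearrangement of $Q_i$. This holds for every $i$, not just the ``at least one'' that Definition~\ref{definition:permutation} literally states. With the identity included, the relation is an equivalence relation whose classes are the families. Your existence step, via the canonical form and Lemma~\ref{lemma0.4}, is likewise more explicit than the paper's ``we can recover its associated $\alpha$.''

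Your bookkeeping remark is substantive, not cosmetic. Take the condensed notation of the paper's example, with zero counts dropped, and read $GM$ as an unordered set. Then $(5;(1,2,3),(1,2,4))$ and $(5;(1,2,4),(1,2,3))$, both of order $(5,3,3)$, have the same $GM$ but lie in different families. So the proposition genuinely needs your fix: index each $\mult(P_i)$ by $i$. Alternatively, keep the pairs $(j,0)$; then the range $0\le j\le\mu_i$ identifies the type, since the $\mu_i$ are strictly increasing.
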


\begin{proof}
We begin by recalling that the set of all parking permutations of an exact $k$-TPF $\alpha$ result in fam($\alpha$). So observe that for a given $GM(\alpha)$, we can recover its associated $\alpha$. Let $\alpha=(m_1; P_2, \ldots, P_k)$ and $GM(\alpha)$ be the generative multiset of $\alpha$. For the sake of contradiction, suppose we construct $\beta=(m_1; Q_2, \ldots, Q_k) \notin$ fam$(\alpha)$ from $GM(\alpha)$. 

This is the same as mapping $GM(\alpha)$ to $\text{fam}(\beta)$, where the map is given by taking the collection of parking permutations of $\alpha.$ In other words, $GM(\alpha)=GM(\beta).$ This contradicts that $\text{fam}(\beta)\neq\text{fam}(\alpha)$ by $\beta\notin \text{fam}(\alpha)$, therefore $\beta$ is a parking permutation of $\alpha$.
\end{proof}

\begin{corollary}
    There is a one to one correspondence between $GM(\alpha)$ and $\text{fam}(\alpha)$ for a given $\alpha$ of order $(m_1, m_2, \ldots, m_k)$.
\end{corollary}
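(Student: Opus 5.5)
We will prove the corollary by exhibiting the map $\Phi:\text{fam}(\alpha)\mapsto GM(\alpha)$ and checking that it is well defined, injective and surjective onto the set of generative multisets of exact $k$-TPFs of order $(m_1,\ldots,m_k)$. By Proposition~\ref{fam_mult}, every family determines exactly one generative multiset. Indeed, any two members of $\text{fam}(\alpha)$ have preference lists $P_i$ and $Q_i$ that are rearrangements of one another, so they have the same $\mult(P_i)$ for each $i$, and hence the same $GM$. So $\Phi$ is well defined.

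For injectivity we invoke Proposition~\ref{mult_fam}. Concretely, suppose $GM(\alpha)=GM(\beta)$ with $\alpha=(m_1;P_2,\ldots,P_k)$ and $\beta=(m_1;Q_2,\ldots,Q_k)$ of the same order. Then $\mult(P_i)=\mult(Q_i)$ for each $i$. The key observation is that a finite list is determined up to rearrangement by its multiset of entries: sorting both lists weakly increasingly gives the same canonical list (Definition~\ref{definition:canonical form}). Hence each $Q_i$ is a rearrangement of $P_i$, so $\beta\in\text{fam}(\alpha)$ and $\text{fam}(\beta)=\text{fam}(\alpha)$.

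For surjectivity, take any admissible collection $\{\mult_2,\ldots,\mult_k\}$ in which $\mult_i$ is a multiset of size $m_i$ supported on $\{0,\ldots,\mu_i\}$. Listing each $\mult_i$ in weakly increasing order gives lists $P_i$ with entries in $[0,\mu_i]$. By Lemma~\ref{lemma0.4}, $(m_1;P_2,\ldots,P_k)$ is an exact $k$-TPF, and its generative multiset is the given collection. Together these steps give a bijection.

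The main obstacle is a subtlety in the paper's notation rather than any mathematical difficulty. $GM(\alpha)$ is written as a \emph{set} $\{\mult(P_2),\ldots,\mult(P_k)\}$, so two types with equal multisets would collapse, and the type index would be lost. I would resolve this by reading $GM(\alpha)$ as the ordered tuple $(\mult(P_2),\ldots,\mult(P_k))$, which is how the proof of Proposition~\ref{fam_mult} implicitly compares the $\mult(P_i)$ type by type. With that reading, the comparisons $\mult(P_i)=\mult(Q_i)$ made above are legitimate.
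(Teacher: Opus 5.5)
Your proof is correct and follows the same route as the paper, whose proof just combines Proposition~\ref{fam_mult} (each family determines one generative multiset) with Proposition~\ref{mult_fam} (equal generative multisets force the same family). Your additions are sound refinements of what the paper leaves implicit: the explicit sorting argument for injectivity, the surjectivity check via Lemma~\ref{lemma0.4}, and reading $GM(\alpha)$ as the ordered tuple $(\mult(P_2),\ldots,\mult(P_k))$. Your well-definedness step also requires Definition~\ref{definition:permutation} to be read as ``every $Q_i$ is a rearrangement of $P_i$'', as in the proof of Theorem~\ref{thrm1}. Under the literal ``at least one'' wording, $GM$ need not be constant on $\text{fam}(\alpha)$.
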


\begin{proof}
    This follows immediately from combining Proposition~\ref{fam_mult} and Proposition~\ref{mult_fam}.
\end{proof}

\begin{theorem}
\label{num_elements_in_fam}
For a fixed $M$ and $\alpha=(m_1;P_2,\ldots,P_k)$ of order $(m_1, m_2, \ldots, m_k), $ the number of elements in each fam$(\alpha)$ is given by
 \begin{align}
&F(\alpha)=\prod_{i=1}^{k}\frac{m_i!}{a_{i_0}!a_{i_1}!\cdots a_{i_{\mu_i}}!}\label{eq:thm 3}
 \end{align}
\end{theorem}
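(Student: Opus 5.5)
The plan is to show that fam$(\alpha)$ factors as a Cartesian product over the types $i=2,\ldots,k$, and then count each factor as the number of distinct rearrangements of a word with prescribed letter multiplicities.

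Two conventions are needed first. The factor $i=1$ in the product is interpreted as $1$: type $1$ cars carry no preference list, so we set $\mu_1=0$ and $a_{1_0}=m_1$, which makes the factor $m_1!/m_1!=1$. Also, fam$(\alpha)$ is read as containing $\alpha$ itself. This is consistent with Example~\ref{ex:perm ex}, where the six elements listed in fam$(\alpha)$ include $(4;(2,3,4))$. So fam$(\alpha)$ is the set of all $\beta=(m_1;Q_2,\ldots,Q_k)$ such that each $Q_i$ is a rearrangement of $P_i$.

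\textbf{Step 1 (membership).} Show that every such $\beta$ is an exact $k$-TPF. Each entry of $Q_i$ is an entry of $P_i$, so it lies in $\{0,\ldots,\mu_i\}$. Lemma~\ref{lemma0.4} then gives $\beta\in\eTPF{M}{k}$. By Theorem~\ref{thrm1}, $\beta$ produces the same configuration, and by Proposition~\ref{fam_mult} it has the same generative multiset.

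\textbf{Step 2 (product decomposition).} Use the condition ``$\mult(Q_i)=\mult(P_i)$ for every $i$'' to see that the choices of $Q_2,\ldots,Q_k$ are made independently. Hence
\[
\mathrm{fam}(\alpha)\;\cong\;\prod_{i=2}^{k} R(P_i),
\]
where $R(P_i)$ denotes the set of distinct tuples that rearrange $P_i$. Two elements of fam$(\alpha)$ are equal exactly when every coordinate list agrees, so this correspondence is a bijection and $|\mathrm{fam}(\alpha)|=\prod_{i=2}^{k}|R(P_i)|$.

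\textbf{Step 3 (counting rearrangements).} For a fixed $i$, $P_i$ is a word of length $m_i$ over the alphabet $\{0,\ldots,\mu_i\}$, in which the letter $j$ appears $a_{i_j}$ times, with $\sum_j a_{i_j}=m_i$. The counting argument runs as follows.
\begin{itemize}
\item The symmetric group $S_{m_i}$ acts transitively on $R(P_i)$ by permuting positions.
\item The stabilizer of $P_i$ is $\prod_j S_{a_{i_j}}$, the permutations that move positions only within each block of equal preferences.
\item By orbit--stabilizer, $|R(P_i)|=m_i!/\prod_{j=0}^{\mu_i} a_{i_j}!$, which is the multinomial coefficient.
\end{itemize}
Letters $j$ with $a_{i_j}=0$ contribute $0!=1$, so including the full range $0,\ldots,\mu_i$ in the denominator is harmless. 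Multiplying over $i$ and adding the trivial $i=1$ factor gives $F(\alpha)$.

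\textbf{Main obstacle.} The counting in Steps 2 and 3 is routine. The real difficulty is justifying that distinct tuples, not distinct orderings of cars, are being counted. Permuting cars that share the same preference yields an identical tuple. This is exactly why one divides by $a_{i_j}!$ rather than simply taking $m_i!$ for each type. The stabilizer computation in Step 3 is the place where this has to be argued carefully.
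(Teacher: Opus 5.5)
Your proof is correct and follows the paper's route. For each $P_i$ you count the distinct rearrangements by the multinomial coefficient $m_i!/\prod_j a_{i_j}!$, multiply over the independently rearranged types, and take the $i=1$ factor to be $1$; your orbit--stabilizer argument and explicit Cartesian-product bijection simply make rigorous what the paper cites as the permutations-with-repetition formula and an independence principle, which also makes the paper's appeal to the generative-multiset/family correspondence unnecessary.
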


\begin{proof}
Let $\alpha = (m_1; P_2, \ldots, P_k)$ in its canonical form. 
The generative multiset associated with $\alpha$ is $GM(\alpha)=\{mult(P_2),\, mult(P_3), \,\dots, mult(P_k)\}$.
That is, $\text{mult}(P_i)=\{(0,a_{i_0},), (1, a_{i_1}), \ldots, (\mu_i, a_{i_{\mu_i}})\}$ and $\mu_i=\displaystyle\sum_{j=1}^{i-1}m_j$.
Using the formula for parking permutations with repeated elements, the number of parking permutations of $P_i$ is represented by  \[\frac{m_i!}{a_{i_0}!a_{i_1}!\cdots a_{i_{\mu_i}}!}.\]

When cars of type $i$ park, the cars of type $\{1, 2, \ldots, i-1\}$ are indistinguishable to cars of type $i$.
Therefore, we can say the way we park each type of car is an independent event.
So we multiply the total number of ways to park for each type to get 
\[\prod_{i=1}^{k}\frac{m_i!}{a_{i_0}!a_{i_1}!\cdots a_{i_{\mu_i}}!}.\]
This is the number of parking permutations for all $P_i$'s for a given value of $i$.
Then by Proposition \ref{prop2_2}, we know every generative multiset corresponds to exactly one fam$(\alpha)$.
Therefore the product above is the number of elements in a given fam$(\alpha)$.
\end{proof}

\begin{remark}
Note that if $i = 1$, there is only one way to park cars on type $1$. Namely, we have $\frac{m_1!}{m_1!}=1.$
\end{remark}

\begin{theorem}
The number of distinct $k$-TPFs is given by the formula
 \begin{align}
&\sum_{j=1}^{L} F(\alpha_j)\label{eq:thrm5.9},\end{align}
where $j$ is an index over each distinct fam($\alpha$), $L$ is as in Theorem \ref{product_theorem}, and F is defined in equation (7).
\end{theorem}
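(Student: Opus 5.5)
The plan is to show that, for a fixed order $(m_1,\ldots,m_k)$, the families fam$(\alpha)$ partition the set of exact $k$-TPFs of that order. The count is then the sum of the family sizes. Concretely, I would define an equivalence relation on exact $k$-TPFs of this order: $\alpha\sim\beta$ if and only if each $Q_i$ is a rearrangement of the corresponding $P_i$, for $i=2,\ldots,k$. I read fam$(\alpha)$ as the equivalence class of $\alpha$, including $\alpha$ itself, which is consistent with the displayed example listing all six arrangements. Reflexivity, symmetry and transitivity are immediate, since rearrangement of a list is an equivalence relation applied coordinatewise. Hence distinct families are disjoint and their union is the whole set counted in Theorem~\ref{thm:all k}.

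Next I would identify the number of classes with $L$. By the Corollary following Proposition~\ref{mult_fam}, classes correspond bijectively to generative multisets $GM(\alpha)$, equivalently to canonical forms (Definition~\ref{definition:canonical form}). By Proposition~\ref{prop2_1} and Corollary~\ref{corollary3}, classes correspond bijectively to resulting configurations. Theorem~\ref{product_theorem} states that there are exactly $L$ configurations, so there are exactly $L$ families, which I index as $\mathrm{fam}(\alpha_1),\ldots,\mathrm{fam}(\alpha_L)$ with each $\alpha_j$ in canonical form. As a sanity check, the canonical forms can also be counted directly: choosing a weakly increasing list of $m_i$ values in $\{0,\ldots,\mu_i\}$ is counted by $\binom{\mu_i+m_i}{m_i}$, which is Lemma~\ref{config_single_type}. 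Taking the product over $i$ recovers $L$, where the $i=1$ factor equals $1$.

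Finally, Theorem~\ref{num_elements_in_fam} gives $|\mathrm{fam}(\alpha_j)|=F(\alpha_j)$. Summing over the disjoint classes then yields
\[
\#\{\text{exact }k\text{-TPFs of order }(m_1,\ldots,m_k)\}=\sum_{j=1}^{L}F(\alpha_j).
\]
As a consistency check, I would note that this equals the product in Theorem~\ref{thm:all k}. The reason is that for each $i$, $\sum \frac{m_i!}{\prod_\ell a_{i_\ell}!}$ over all compositions $(a_{i_0},\ldots,a_{i_{\mu_i}})$ of $m_i$ equals $(\mu_i+1)^{m_i}$ by the multinomial theorem. The sum over families factors as a product over $i$, because the choices of $\mult(P_i)$ for different $i$ are independent.

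The main obstacle is making the partition claim rigorous. Definition~\ref{definition:permutation} as literally stated only requires that \emph{at least one} $Q_i$ be a rearrangement of $P_i$, which would not give an equivalence relation. I would therefore first fix the interpretation that all $Q_i$ are rearrangements of the corresponding $P_i$, as is used in the proof of Theorem~\ref{thrm1}. With that interpretation, disjointness and covering are straightforward, and the rest is bookkeeping with the earlier results.
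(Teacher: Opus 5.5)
Your proposal is correct and follows the paper's route: index the families by the $L$ configurations via Proposition~\ref{prop2_1} and Theorem~\ref{product_theorem}, take each family's size from Theorem~\ref{num_elements_in_fam}, and sum. You also read Definition~\ref{definition:permutation} as requiring every $Q_i$ to be a rearrangement, with $\alpha$ itself included, and check that families are then equivalence classes; this supplies the disjointness and covering the paper's proof uses without comment, and your direct count of canonical forms gives $L$ families without relying on the configuration correspondence.
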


\begin{proof}

We can index the families of an exact $k$-TPF $\alpha$ from 1 through $L$, since by Theorem \ref{product_theorem} there are $L$ distinct parking configurations, which is the same as the number of fam($\alpha$) by Proposition \ref{prop2_1}. 
For each of the families, from Theorem \ref{num_elements_in_fam} there are \[\prod_{i=1}^{k}\frac{m_i!}{a_{i_0}!a_{i_1}!\cdots a_{i_{\mu_i}}!}.\] $k$-TPFs that are contained within that family. By counting the number of $k$-TPFs in each family for all of the $L$ families, we count the total number of distinct $k$-TPFs, which is \[\sum_{j=1}^{L} F(\alpha_j)\]$k$-TPFs.
\end{proof}

We observe that equation \eqref{eq:thm 1} must equal equation \eqref{eq:thm 3}, i.e.
 \begin{align}
\prod_{i=2}^{k}\left(1+\sum_{j=1}^{i-1}m_j\right)^{m_i}&=\sum_{j=1}^{L} F(\alpha_j)
 \end{align}
where $L=\displaystyle\prod_{i=1}^k {\sum_{j=1}^i m_j \choose m_i}$ and $\alpha$ is a $k$-typed parking function of order $(m_1, m_2, \ldots, m_k)$ for a fixed $M$.

\section{Discussion and Future Works}
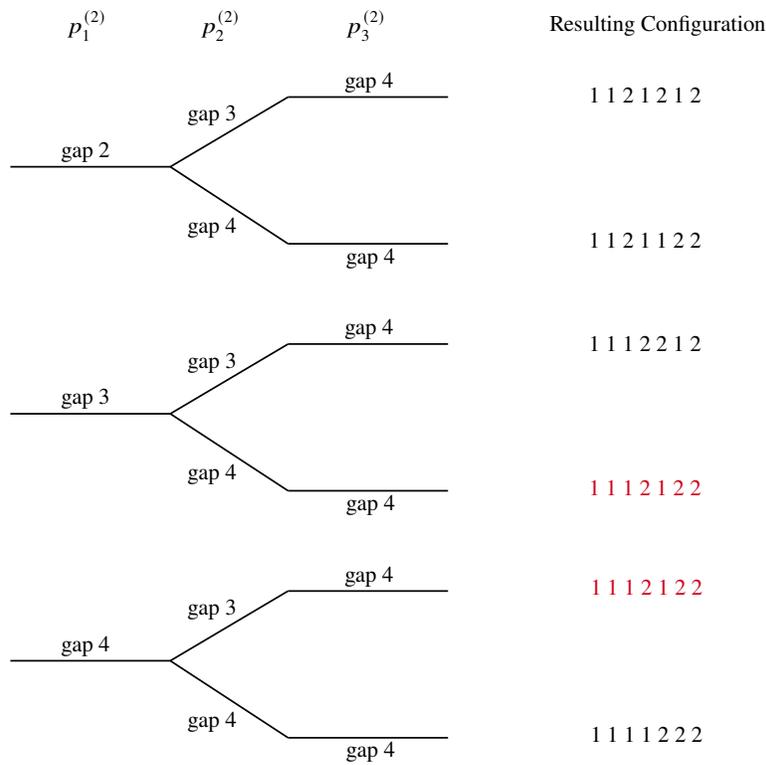
\begin{figure}[h!!!!!!]
    \centering
    \scalebox{1}
    {\input{figures/futureworks}}
    \caption{Parking cars using ``at-least''} definition
    \label{fig:my_label}
\end{figure}
One may have noticed that setting $m_1=n$ and $m_2=n-1$ in the base case of Theorem \ref{thm:all k} will recover the enumeration for the number of classical parking functions as described in the introduction. This encouraged us to ask, ``What are the algebraic implications between $k$-typed parking functions and classical parking functions?'' 
Considering the substantial research that has already been made about the connections between revised definitions of parking functions to fundamental, combinatorial objects such as Catalan numbers and Dyck Paths \cite{science-DOI-shuffle}, we can inquire about the role exact $k$-TPFs can play.
Another point is that using our definition of $k$-typed parking functions, we repeat Dukes'~\cite{science-DOI-duke} question using the terminology developed in our paper: How many $2$-typed parking functions are there? How many $3$-typed parking functions of order $(a,b,c)$ are there? We observe that it is challenging to count the number of distinct configurations, using the original definition of $k$-typed parking functions, by providing an example and ending with our own open question. 

Recall the original $k$-TPF definition:
\begin{quote}
For each preference list $P_i$, the driver of the $j^{\rm th}$ car has the preference that there be \textbf{at least} $p_j^{(i)}$ cars of types $\{1, \ldots, i-1\}$ parked before them.
\end{quote}
We begin with the case where $k=2$ given an exact $k$-TPF $\alpha = (4;(2,3,4))$. Each element in the preference list $P_2$ can park 
between gaps $p_j^{(2)}$ and $m_1$ (inclusive). That is: gap 2 $\leq p_1^{(2)} \leq$ gap 4, gap 3 $\leq p_2^{(2)} \leq$ gap 4, and $p_3^{(2)} \leq$ gap 4. 
Observe that in Figure \ref{fig:my_label}, there are two identical configurations resulting from the same preference list. This is because multiple cars within the same preference list may want to park in the same gap. 
\begin{quote}
\textbf{Problem 1:} Given $\alpha = (m_1;(a_1,\ldots,a_\ell))$, how many distinct resulting configurations are there using the at-least definition?
\end{quote}

\input{references1}

\begin{acknowledgement}
The authors would like to thank the entire SLMath/MSRI community for their continued support and useful advice. Special thank you to Professor Rebecca Garcia and Professor Pamela E. Harris for presenting these problems to us and for their mentorship. Thank you to all professors and colleagues from our respective institutions that provided help in any capacity. 
\end{acknowledgement}

\ethics{Competing Interests}{\newline
We acknowledge that Lina Liu is supported by the NSF GRFP Grant 2237827. The authors have no conflicts of interest to declare that are relevant to the content of this chapter.}

\end{document}

%% file: figures/futureworks.tex
\tikzset{every picture/.style={line width=0.65pt}} 

\begin{tikzpicture}[x=0.55pt,y=0.55pt,yscale=-1,xscale=1]

\draw    (49,118) -- (159,118) ;
\draw    (159,118) -- (240,70) ;
\draw    (159,118) -- (240,171) ;
\draw    (240,70) -- (350,70) ;
\draw    (240,171) -- (350,171) ;

\draw    (49,288) -- (159,288) ;
\draw    (159,288) -- (240,240) ;
\draw    (159,288) -- (240,341) ;
\draw    (240,240) -- (350,240) ;
\draw    (240,341) -- (350,341) ;
\draw    (49,458) -- (159,458) ;
\draw    (159,458) -- (240,410) ;
\draw    (159,458) -- (240,511) ;
\draw    (240,410) -- (350,410) ;
\draw    (240,511) -- (350,511) ;

\draw (82,99) node [anchor=north west][inner sep=0.75pt]   [align=left] {gap 2\\};
\draw (169,74) node [anchor=north west][inner sep=0.75pt]   [align=left] {gap 3\\};
\draw (277,51) node [anchor=north west][inner sep=0.75pt]   [align=left] {gap 4\\};
\draw (278,172) node [anchor=north west][inner sep=0.75pt]   [align=left] {gap 4\\};
\draw (169,151) node [anchor=north west][inner sep=0.75pt]   [align=left] {gap 4\\};
\draw (169,491) node [anchor=north west][inner sep=0.75pt]   [align=left] {gap 4\\};
\draw (278,512) node [anchor=north west][inner sep=0.75pt]   [align=left] {gap 4\\};
\draw (277,391) node [anchor=north west][inner sep=0.75pt]   [align=left] {gap 4\\};
\draw (169,414) node [anchor=north west][inner sep=0.75pt]   [align=left] {gap 3\\};
\draw (82,439) node [anchor=north west][inner sep=0.75pt]   [align=left] {gap 4\\};
\draw (418,12) node [anchor=north west][inner sep=0.75pt]   [align=left] {Resulting Configuration\\};
\draw (445,61.4) node [anchor=north west][inner sep=0.75pt]    {$1\ 1\ 2\ 1\ 2\ 1\ 2\ $};
\draw (445,161.4) node [anchor=north west][inner sep=0.75pt]    {$1\ 1\ 2\ 1\ 1\ 2\ 2\ $};
\draw (445,232.4) node [anchor=north west][inner sep=0.75pt]    {$1\ 1\ 1\ 2\ 2\ 1\ 2\ $};
\draw (445,331.4) node [anchor=north west][inner sep=0.75pt]  [color={rgb, 255:red, 208; green, 2; blue, 27 }  ,opacity=1 ]  {$1\ 1\ 1\ 2\ 1\ 2\ 2\ $};
\draw (446,400.4) node [anchor=north west][inner sep=0.75pt]  [color={rgb, 255:red, 208; green, 2; blue, 27 }  ,opacity=1 ]  {$1\ 1\ 1\ 2\ 1\ 2\ 2\ $};
\draw (446,501.4) node [anchor=north west][inner sep=0.75pt]    {$1\ 1\ 1\ 1\ 2\ 2\ 2\ $};
\draw (86,8.4) node [anchor=north west][inner sep=0.75pt]    {$p_{1}^{( 2)}$};
\draw (178,8.4) node [anchor=north west][inner sep=0.75pt]    {$p_{2}^{( 2)}$};
\draw (278,8.4) node [anchor=north west][inner sep=0.75pt]    {$p_{3}^{( 2)}$};
\draw (82,269) node [anchor=north west][inner sep=0.75pt]   [align=left] {gap 3\\};
\draw (169,244) node [anchor=north west][inner sep=0.75pt]   [align=left] {gap 3\\};
\draw (277,221) node [anchor=north west][inner sep=0.75pt]   [align=left] {gap 4\\};
\draw (278,342) node [anchor=north west][inner sep=0.75pt]   [align=left] {gap 4\\};
\draw (169,321) node [anchor=north west][inner sep=0.75pt]   [align=left] {gap 4\\};

\end{tikzpicture}

%% file: references1.tex
%
%
%